\newtheorem{thm}{Theorem}[section]
\newtheorem{prop}[thm]{Proposition}
\newtheorem{cor}[thm]{Corollary}
\newtheorem{conj}[thm]{Conjecture}
\def\Ex{\mathbb E}
\def\er{\mathbb R}
\def\Pr{\mathbb P}
\def\ve{\varepsilon}
\def\Cov{\mathrm{Cov}}
\def\cale{{\mathcal E}}
\theoremstyle{definition}
\newtheorem*{xrem}{Remark}
\begin{document}



\title{Weak and strong moments of random vectors}

\author{Rafa{\l} Lata{\l}a}
\date{}

\maketitle

\begin{abstract}
We discuss a conjecture about comparability of weak and strong moments of log-concave 
random vectors and show the conjectured inequality for unconditional vectors in 
normed spaces with a bounded cotype constant.
\end{abstract}

\section{Introduction} 

Let $X$ be a random vector with values in some normed space $(F,\|\ \|)$. The question we will discuss
is how to estimate $\|X\|_p=(\Ex\|X\|^p)^{1/p}$ for $p\geq 1$. Obviously
$\|X\|_p\geq \|X\|_1=\Ex\|X\|$ and for any continuous linear functional $\varphi$ on $F$ with
$\|\varphi\|_{*}\leq 1$ we have $\|X\|_p\geq (\Ex|\varphi(X)|^p)^{1/p}$. It turns out that in some
situations one may reverse these obvious estimates and show that for an absolute constant $C$ 
and any $p\geq 1$,
\[
(\Ex\|X\|^p)^{1/p}\leq C\Big(\Ex\|X\|+\sup_{\|\varphi\|_{*}\leq 1}(\Ex|\varphi(X)|^p)^{1/p}\Big).
\]
This is for example the case when $X$ has Gaussian or product exponential distribution.
In this note we will concentrate on the more general case of log-concave vectors.

A measure $\mu$ on $\er^n$ is called logarithmically concave (log-concave in short) if 
for any compact nonempty sets $A,B\subset \er^n$ and $\lambda\in (0,1)$,
\[
\mu(\lambda A+(1-\lambda)B)\geq \mu(A)^{\lambda}\mu(B)^{1-\lambda}.
\]
By the result of  Borell \cite{Bo} a measure $\mu$ on $\er^n$ with full dimensional support 
is log-concave  if and only if it is absolutely continuos with respect to the Lebesgue measure and
has a density of the form $e^{-f}$, where 
$f\colon \er^n\rightarrow (-\infty,\infty]$
is a convex function. Log-concave measures are frequently
studied in convex geometry, since by the Brunn-Minkowski inequality  uniform distributions
on convex bodies as well as their lower dimensional marginals are log-concave. 
In fact the class of log-concave measures on $\er^n$ is the smallest class of 
probability measures closed under linear transformation 
and weak limits that contains uniform distributions on convex bodies. Vectors with logaritmically
concave distributions are called log-concave.

In the sequel we discuss the following conjecture posed in a stronger form in \cite{LW} 
about the comparison of strong
and weak moment for log-concave vectors.

\begin{conj}
\label{ws_conj}
For any $n$ dimensional log-concave random vector and any norm $\|\ \|$ on $\er^n$ we have
for $1\leq p<\infty$,
\begin{equation}
\label{weak_strong}
(\Ex\|X\|^p)^{1/p}\leq C_1\Ex\|X\|+C_2\sup_{\|\varphi\|_{*}\leq 1}(\Ex|\varphi(X)|^p)^{1/p},
\end{equation}
where $C_1$ and $C_2$ are absolute constants.
\end{conj}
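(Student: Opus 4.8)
\emph{Reduction to a deviation inequality.} Inequality \eqref{weak_strong} is still a conjecture --- known for the Euclidean norm (Paouris' theorem) and for Gaussian and product-exponential $X$, but open for a general norm --- and the route I would take is to prove it under the two restrictions the abstract singles out, $X$ unconditional and $(\er^n,\|\ \|)$ of bounded cotype. First, set $\sigma_s:=\sup_{\|\varphi\|_{*}\le1}(\Ex|\varphi(X)|^s)^{1/s}$. Then $\sigma_1\le\Ex\sup_{\|\varphi\|_{*}\le1}|\varphi(X)|=\Ex\|X\|$, and applying Borell's reverse H\"older inequality uniformly to the seminorms $x\mapsto|\varphi(x)|$ gives $\sigma_s\le C(s/r)\sigma_r$ for $s\ge r\ge1$. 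A routine Chebyshev-and-integration argument then shows that \eqref{weak_strong} is equivalent, up to the values of the constants, to the one-sided tail bound
\[
\Pr\big(\|X\|\ge C_1\Ex\|X\|+C_2\sigma_s\big)\le e^{-s}\qquad(s\ge1);
\]
in the Gaussian and exponential cases this is exactly what the Gaussian, respectively Talagrand's exponential, concentration inequality delivers once one checks that $\sigma_s$ reproduces the correct scale, so the content of Conjecture~\ref{ws_conj} is to produce such a concentration statement for an arbitrary norm.

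\emph{Symmetrization and conditional concentration.} Assume now $X$ unconditional and the Rademacher cotype-$q$ constant of $(\er^n,\|\ \|)$ at most $\beta$, with $q\ge2$. Then $X$ has the law of $(\ve_iY_i)_{i\le n}$, where $Y_i=|X_i|$, the sequence $\ve=(\ve_i)$ is Rademacher and independent of $Y=(Y_1,\dots,Y_n)$, and $Y$ is again log-concave, so each $Y_i$ has a log-concave law on $[0,\infty)$ with $\|Y_i\|_s\le Cs\|Y_i\|_1$. Conditionally on $Y$ the map $\ve\mapsto\|\sum_i\ve_iY_ie_i\|$ is convex and Euclidean-Lipschitz with constant $(\sum_iY_i^2\|e_i\|^2)^{1/2}$, so Talagrand's concentration inequality for convex functions on $\{-1,1\}^n$ yields, writing $M(Y):=\Ex_\ve\|\sum_i\ve_iY_ie_i\|$,
\[
\|X\|_p\ \le\ \big\|M(Y)\big\|_{L^p_Y}\ +\ C\sqrt p\,\Big\|\Big(\sum_iY_i^2\|e_i\|^2\Big)^{1/2}\Big\|_{L^p_Y}.
\]
It then remains to bound this ``mean part'' and ``deviation part'' by $C_1\Ex\|X\|+C_2\sigma_p$.

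\emph{Using cotype.} Cotype enters twice. Applied to the vectors $Y_ie_i$ it gives $(\sum_iY_i^q\|e_i\|^q)^{1/q}\le C\beta M(Y)$, and the elementary inequality $\Ex_\ve\|\sum_i\ve_ix_i\|\ge\|x_j\|$ gives $M(Y)\ge\max_jY_j\|e_j\|$; together these imply $(\sum_iY_i^2\|e_i\|^2)^{1/2}\le C\beta M(Y)$, but used bluntly this only bounds the deviation part by $C\sqrt p\,\|M(Y)\|_{L^p_Y}$, which is too large. One therefore splits $\{1,\dots,n\}$ into the $\lceil p\rceil$ indices with largest $Y_i\|e_i\|$ and the rest: on the ``small'' block the same cotype/order inequalities, combined with the reverse H\"older estimates for the $Y_i$, bound $\sqrt p\,(\sum_{i\,\mathrm{small}}Y_i^2\|e_i\|^2)^{1/2}$ in $L^p_Y$ by $C(\Ex\|X\|+\sigma_p)$, while the ``large'' block $\sum_{i\,\mathrm{large}}Y_i\|e_i\|$ is estimated against $\sigma_p$ by testing with functionals supported on those coordinates --- for a single one $\|Y_i\|_p=\|X_i\|_p\le\|e_i^{*}\|_{*}\sigma_p$ --- together with a counting argument limiting how many coordinates can be simultaneously large. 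The mean part $\|M(Y)\|_{L^p_Y}$ is the strong $p$-moment of the \emph{unconditional} norm $M(\cdot)$, again of cotype constant $\le C\beta$, applied to the unconditional log-concave vector $Y$; here one reduces via the Bobkov--Nazarov comparison to a vector with independent exponential coordinates, for which \eqref{weak_strong} is known, the bounded cotype being what keeps $\Ex\|X\|$ and $\sigma_p$ under control through that comparison.

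\emph{The main obstacle.} The crux is the ``small-block'' estimate $\sqrt p\,\|(\sum_{i\,\mathrm{small}}Y_i^2\|e_i\|^2)^{1/2}\|_{L^p_Y}\le C(\Ex\|X\|+\sigma_p)$: the crude bound $\sum_iY_i^2\|e_i\|^2\le(\max_iY_i\|e_i\|)^{2-q}\sum_iY_i^q\|e_i\|^q$ followed by cotype returns only $\sqrt p\,M(Y)$, so one must genuinely exploit the interplay between the randomness of $Y$ and the truncation being tuned to the level $p$. This is the step where both symmetrization (hence unconditionality) and the replacement of $\ell^1$-sums of coordinate contributions by $\ell^q$-sums (hence bounded cotype) are indispensable; for a norm that is neither unconditional nor of bounded cotype no analogue of this argument --- nor of the concentration inequality it is meant to replace --- is available, which is why Conjecture~\ref{ws_conj} remains open in general.
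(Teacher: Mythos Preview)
First, recall that the statement is a conjecture which the paper does not settle in general; it is proved there only under the same extra hypotheses you adopt (unconditional $X$ and bounded cotype of $(\er^n,\|\cdot\|)$). The paper's argument for that case is structurally unlike yours: it never conditions on $|X|$ and never uses concentration on the discrete cube. Instead it invokes Talagrand's generic-chaining characterization of $\Ex\|\cale\|$ to produce nets $T_n\subset\{t:\|t\|_*\le1\}$ with $\#T_n\le 2^{2^n}$ and $\sup_t\sum_n\|\langle\pi_{n+1}(t)-\pi_n(t),\cale\rangle\|_{2^n}\le C\,\Ex\|\cale\|$. Choosing $n_0$ with $2^{n_0}\sim p$ splits $\|X\|$ into a supremum over at most $2^{4p}$ functionals (bounded by $C\sigma_p$ by a crude union bound) and a chaining tail whose $L^p$-norm is controlled, via the Bobkov--Nazarov comparison $\|\langle t,X\rangle\|_r\le C\|\langle t,\cale\rangle\|_r$, by $C\,\Ex\|\cale\|$. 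Cotype enters only at the very end, through Maurey--Pisier: $\Ex\|\cale\|\le C(q,\beta)\,\Ex\|\sum_i\ve_ie_i\|\le C(q,\beta)\,\Ex\|X\|$, the last step by unconditionality and Jensen. No coordinate splitting, no small-block estimate.

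Your route, by contrast, has genuine gaps. The inequality $(\sum_iY_i^2\|e_i\|^2)^{1/2}\le C\beta\,M(Y)$ that you claim to deduce from cotype $q$ together with $\max_iY_i\|e_i\|\le M(Y)$ is \emph{false} for $q>2$: in $\ell_r^n$ with $r>2$ and $Y_i\equiv1$ one has $M(Y)=n^{1/r}$ while the left side is $n^{1/2}$. Cotype $q$ controls only the $\ell^q$-sum of the $Y_i\|e_i\|$, and for $q>2$ this together with the maximum cannot recover the $\ell^2$-sum. (Incidentally, the correct Euclidean Lipschitz constant of $\ve\mapsto\|\sum_i\ve_iY_ie_i\|$ is $\sup_{\|\varphi\|_*\le1}(\sum_iY_i^2\varphi(e_i)^2)^{1/2}$, in general much smaller than the quantity you use.) Your treatment of the mean part is effectively circular: since $M(y)$ depends only on $(|y_i|)$, one has $\|M(Y)\|_{L^p_Y}=(\Ex M(X)^p)^{1/p}$, i.e.\ the strong $p$-th moment of the $1$-unconditional norm $M(\cdot)$, which has the same cotype as $\|\cdot\|$; saying ``reduce via Bobkov--Nazarov to the exponential vector, for which \eqref{weak_strong} is known'' is precisely the content of the paper's Theorem~\ref{uncond}, not an independent step. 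Finally, you explicitly leave the small-block estimate open. In short, the conditioning/cotype scheme as written does not close, while the generic-chaining decomposition in the paper sidesteps all three difficulties simultaneously.
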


In Section 2 we gather known results about validity of \eqref{weak_strong} in special cases. Section 3 is
devoted to the unconditional vectors. In particular we show that Conjecture \ref{ws_conj} is satisfied
under additional assumption of unconditionality of $X$ and bounded cotype constant of 
the underlying normed space. 

\subsection*{Notation} Let $(\ve_i)$ be a Bernoulli sequence, i.e.\
a sequence of independent symmetric variables taking values $\pm 1$. We assume that 
$(\ve_i)$ are independent of other random variables.

By $(\cale_i)$ we denote a sequence of independent symmetric exponential random variables
with variance 1 (i.e. the density $2^{-1/2}\exp(-\sqrt{2}|x|)$). We set
$\cale=\cale^{(n)}=(\cale_1,\ldots,\cale_n)$ for an $n$-dimensional random vector with product 
exponential distribution and identity covariance matrix.

By $\langle\cdot,\cdot\rangle$ we denote the standard scalar
product on $\er^n$ and by $(e_i)$ the standard basis of $\er^n$.
 We set $B_p^n$ for a unit ball in ${\ell}^n_p$, i.e.\
$B_p^n=\{x\in\er^n\colon \|x\|_p\leq 1\}$. For a random variable $Y$ and $p>0$ we write
$\|Y\|_p=(\Ex|Y|^p)^{1/p}$.

We write $C$ (resp.\ $C(\alpha)$) to denote universal constants (resp.\ constants 
depending only on parameter $\alpha$). Value of a constant $C$ may differ at each
occurence. 

\section{Known results} Since any norm on $\er^n$ may be approximated by a supremum
of exponential number of functionals we get

\begin{prop}[see  {\cite[Proposition 3.20]{LW}}]
For any $n$-dimensional random vector $X$ inequality \eqref{weak_strong} holds for $p\geq n$
with $C_1=0$ and $C_2=10$.
\end{prop}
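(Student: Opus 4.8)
The plan is to reduce an arbitrary norm to a supremum over a finite — indeed exponential-size — net of functionals, and then to estimate that supremum directly by a crude union bound that exploits the hypothesis $p\ge n$. First I would fix a norm $\|\ \|$ on $\er^n$ and let $K$ be its unit ball, so that $\|x\| = \sup_{\varphi\in K^\circ}\varphi(x)$ where $K^\circ$ is the polar body. By a standard volumetric argument one can find, for any $\delta\in(0,1)$, a subset $\{\varphi_1,\ldots,\varphi_N\}\subset K^\circ$ with $N\le (1+2/\delta)^n$ that is a $\delta$-net of $K^\circ$ in the norm $\|\ \|_*$, and then $\|x\|\le (1-\delta)^{-1}\max_{i\le N}\varphi_i(x)$ for every $x$; taking $\delta=1/2$ gives $N\le 5^n$ and $\|x\|\le 2\max_{i\le N}\varphi_i(x)$.

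Next I would bound the $p$-th moment of $\max_i\varphi_i(X)$ by the sum of the individual $p$-th moments:
\[
\Ex\max_{i\le N}|\varphi_i(X)|^p \le \sum_{i=1}^N \Ex|\varphi_i(X)|^p \le N\max_{i\le N}\Ex|\varphi_i(X)|^p .
\]
Taking $p$-th roots, $\bigl(\Ex\max_i|\varphi_i(X)|^p\bigr)^{1/p}\le N^{1/p}\max_i(\Ex|\varphi_i(X)|^p)^{1/p}\le N^{1/p}\sup_{\|\varphi\|_*\le 1}(\Ex|\varphi(X)|^p)^{1/p}$. Combining with the net estimate,
\[
(\Ex\|X\|^p)^{1/p}\le 2N^{1/p}\sup_{\|\varphi\|_*\le1}(\Ex|\varphi(X)|^p)^{1/p}.
\]
Since $N\le 5^n$ and $p\ge n$, we get $N^{1/p}\le 5^{n/p}\le 5$, so the right-hand side is at most $10\,\sup_{\|\varphi\|_*\le1}(\Ex|\varphi(X)|^p)^{1/p}$, which is exactly \eqref{weak_strong} with $C_1=0$ and $C_2=10$. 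Note this argument uses no structure of $X$ whatsoever — no log-concavity, no unconditionality — which matches the generality claimed in the statement.

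The only slightly delicate point, and the one I would write out carefully, is the construction of the net and the passage from a $\delta$-net of $K^\circ$ to the inequality $\|x\|\le (1-\delta)^{-1}\max_i\varphi_i(x)$: given $x$, pick $\varphi\in K^\circ$ with $\varphi(x)=\|x\|$, choose $\varphi_i$ with $\|\varphi-\varphi_i\|_*\le\delta$, and estimate $\varphi_i(x)\ge\varphi(x)-\|\varphi-\varphi_i\|_*\|x\|\ge(1-\delta)\|x\|$. One should also make sure the net can be taken inside $K^\circ$ rather than merely close to it (a routine retraction, or absorb the loss into the constant). Everything else is the elementary bound $\max\le(\sum(\cdot)^p)^{1/p}$ and the arithmetic $5^{n/p}\le5$; there is no real obstacle here, which is presumably why the authors relegate it to a one-line "since any norm may be approximated by a supremum of exponentially many functionals."
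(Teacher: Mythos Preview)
Your proof is correct and is precisely the argument the paper has in mind: the one-line justification given there (``since any norm on $\er^n$ may be approximated by a supremum of exponential number of functionals'') is exactly your $\tfrac12$-net of $K^\circ$ with cardinality at most $5^n$, followed by the trivial bound $\max\le(\sum(\cdot)^p)^{1/p}$ and $5^{n/p}\le 5$ for $p\ge n$. The paper itself gives no further details, deferring to \cite[Proposition~3.20]{LW}, so there is nothing to compare beyond this.
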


It is also easy to reduce Conjecture \ref{ws_conj} to the case of symmetric vectors.

\begin{prop}
\label{symmetrization}
Suppose that \eqref{weak_strong} holds for all symmetric $n$-dimensional log-concave vectors $X$.
Then it is also satisfied with constants $4C_1+1$ and $4C_2$ by all log-concave vectors $X$. 
\end{prop}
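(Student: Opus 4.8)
The plan is to pass from an arbitrary log-concave vector $X$ to a symmetric one by a standard symmetrization, apply the assumed inequality to the symmetrized vector, and then bookkeep the constants. First I would let $X'$ be an independent copy of $X$ and set $Y=X-X'$. Then $Y$ is symmetric, and it is log-concave: the distribution of $(X,X')$ is log-concave on $\er^{2n}$ (product of log-concave measures), and $Y$ is a linear image of it, so log-concavity is preserved by Borell's characterization together with closure under linear maps. Hence the hypothesis applies to $Y$, giving
\[
(\Ex\|Y\|^p)^{1/p}\leq C_1\Ex\|Y\|+C_2\sup_{\|\varphi\|_*\leq 1}(\Ex|\varphi(Y)|^p)^{1/p}.
\]

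Next I would relate the three quantities for $Y$ to those for $X$. For the strong $p$-th moment, Jensen's inequality in the form of centering gives $(\Ex\|X-\Ex X\|^p)^{1/p}\leq (\Ex\|X-X'\|^p)^{1/p}$, and then $(\Ex\|X\|^p)^{1/p}\leq (\Ex\|X-\Ex X\|^p)^{1/p}+\|\Ex X\|\leq (\Ex\|Y\|^p)^{1/p}+\Ex\|X\|$, using $\|\Ex X\|\leq \Ex\|X\|$. For the weak moments, for each fixed $\varphi$ the real random variable $\varphi(X)$ satisfies $(\Ex|\varphi(X)-\Ex\varphi(X)|^p)^{1/p}\leq (\Ex|\varphi(Y)|^p)^{1/p}$, and also $(\Ex|\varphi(X)|^p)^{1/p}\leq (\Ex|\varphi(Y)|^p)^{1/p}+|\Ex\varphi(X)|$; taking the supremum over $\|\varphi\|_*\leq 1$ and noting $\sup_\varphi|\Ex\varphi(X)|=\|\Ex X\|\leq \Ex\|X\|$ bounds the weak moment of $X$ by that of $Y$ plus $\Ex\|X\|$. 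Finally $\Ex\|Y\|\leq 2\Ex\|X\|$ by the triangle inequality. One more elementary fact I would use in the reverse direction: to estimate the right-hand side of the hypothesis applied to $Y$ I instead keep it as is, and combine. Chaining these bounds: $(\Ex\|X\|^p)^{1/p}\leq (\Ex\|Y\|^p)^{1/p}+\Ex\|X\|\leq 2C_1\Ex\|X\|+C_2\big(2\sup_\varphi(\Ex|\varphi(X)|^p)^{1/p}+\Ex\|X\|\big)+\Ex\|X\|$, which after regrouping is of the form $(4C_1+1)\Ex\|X\|+4C_2\sup_\varphi(\Ex|\varphi(X)|^p)^{1/p}$ with room to spare (the crude factor $4$ absorbs all the additive $\Ex\|X\|$ terms and the factor $2$'s).

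The only genuinely delicate point is making sure that the constant arithmetic actually closes at $4C_1+1$ and $4C_2$ rather than something slightly larger; I expect this to come out comfortably, since $2C_1+C_2\cdot 0+2\leq 4C_1+1$ whenever $C_1\geq 1$ (which one may assume, as the inequality is trivial otherwise with $C_1=1$), and $2C_2\leq 4C_2$. Everything else — log-concavity of $Y$, the centering inequalities, passing suprema through — is routine. So the main (and really the only) obstacle is organizing the estimates so the claimed numerical constants are not exceeded; there is no analytic difficulty.
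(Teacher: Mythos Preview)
Your approach is the same symmetrization idea as the paper's, and the core of it is fine: set $Y=X-X'$, apply the hypothesis to $Y$, and compare. The paper organizes this in two stages (first reduce to mean zero, getting $(2C_1,2C_2)$, then center a general $X$, getting $(4C_1+1,4C_2)$), whereas you do it in one step.

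There is, however, a genuine bookkeeping slip. In your weak-moment paragraph you prove the wrong direction: you show $\sup_{\|\varphi\|_*\le 1}(\Ex|\varphi(X)|^p)^{1/p}\le \sup_{\|\varphi\|_*\le 1}(\Ex|\varphi(Y)|^p)^{1/p}+\Ex\|X\|$, but what you need in order to bound the right-hand side of the hypothesis applied to $Y$ is the opposite inequality. The correct (and simpler) bound is just Minkowski:
\[
(\Ex|\varphi(Y)|^p)^{1/p}\le (\Ex|\varphi(X)|^p)^{1/p}+(\Ex|\varphi(X')|^p)^{1/p}=2(\Ex|\varphi(X)|^p)^{1/p}.
\]
As written, your chain produces a coefficient $2C_1+C_2+1$ in front of $\Ex\|X\|$, and your claim that this is at most $4C_1+1$ would require $C_2\le 2C_1$, which is not assumed. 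With the correct weak-moment bound above the extra $C_2\Ex\|X\|$ term disappears and you get
\[
(\Ex\|X\|^p)^{1/p}\le (\Ex\|Y\|^p)^{1/p}+\Ex\|X\|\le 2C_1\Ex\|X\|+2C_2\sup_{\|\varphi\|_*\le 1}(\Ex|\varphi(X)|^p)^{1/p}+\Ex\|X\|,
\]
i.e.\ constants $(2C_1+1,2C_2)$, which are in fact better than the $(4C_1+1,4C_2)$ in the statement. So once this direction is fixed, your one-step argument is slightly cleaner than the paper's two-step version.
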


\begin{proof}
Assume first that $X$ has a log-concave distribution and $\Ex X=0$. Let $X'$ be an independent copy of $X$,
then $X-X'$ is symmetric and log-concave. Moreover for $p\geq 1$,
\begin{align*}
(\Ex\|X\|^p)^{1/p}&=(\Ex\|X-\Ex X'\|^p)^{1/p}\leq (\Ex\|X-X'\|^p)^{1/p},
\\
\Ex\|X-X'\|&\leq \Ex\|X\|+\Ex\|X'\|=2\Ex\|X\|
\end{align*}
and for any functional $\varphi$,
\[ 
(\Ex|\varphi(X-X')|^p)^{1/p}\leq (\Ex|\varphi(X)|^p)^{1/p}+(\Ex|\varphi(X')|^p)^{1/p}=2
(\Ex|\varphi(X)|^p)^{1/p}
\] 
Hence \eqref{weak_strong} holds for $X$ with constant $2C_1$ and $2C_2$.

If $X$ is arbitrary log-concave then $X-\Ex X$ is log-concave with mean zero. We have for any $p\geq 1$,
\[
(\Ex\|X\|^p)^{1/p}\leq (\Ex\|X-\Ex X\|^p)^{1/p}+\Ex\|X\|, \quad
\Ex\|X-\Ex X\|\leq 2\Ex\|X\|
\]
and for any functional $\varphi$,
\[
(\Ex|\varphi(X-\Ex X)|^p)^{1/p}\leq (\Ex|\varphi(X)|^p)^{1/p}+|\varphi(\Ex X)|\leq
2(\Ex|\varphi(X)|^p)^{1/p}.
\]
\end{proof}

\begin{xrem}
Estimating  $\|X\|_p$ is strictly connected with bounding tails of $\|X\|$. Indeed by
Chebyshev's inequality we have
\[
\Pr(\|X\|\geq e\|X\|_p)\leq e^{-p}
\]
and by the Paley-Zygmund inequality and the fact that $\|X\|_{2p}\leq C\|X\|_p$ for $p\geq 1$ we get
\[
\Pr\Big(\|X\|\geq \frac{1}{C}\|X\|_p\Big)\geq \min\Big\{\frac{1}{C},e^{-p}\Big\}.
\]
\end{xrem}

Gaussian concentration inequality easily implies \eqref{weak_strong} for Gaussian vectors $X$
(see for example Chapter 3 of \cite{LT}). For Rademacher sums comparability of weak and strong moments was established by Dilworth and Montgomery-Smith \cite{DMS}.
More general statement was shown in \cite{L3}.

\begin{thm}
\label{ind_sum}
Suppose that $X=\sum_{i}v_i\xi_i$, where $v_i\in F$ and $\xi_i$ are independent symmetric
r.v's with logarithmically concave tails. Then for any $p\geq 1$ inequality \eqref{weak_strong} 
holds with absolute constants $C_1$ and $C_2$.
\end{thm}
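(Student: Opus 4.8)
Here is the plan I would follow. Since $X=\sum_i v_i\xi_i$ is symmetric by hypothesis and, after a routine approximation, we may take the sum finite with $v_1,\dots,v_n\in F$, it suffices to prove \eqref{weak_strong} in this generality. The first reduction is to the case $\xi_i=\cale_i$: then $X=T\cale^{(n)}$ for the linear map $T\colon\er^n\to F$, $Te_i=v_i$, so that $\|X\|=\|T\,\cdot\,\|$ is a seminorm on $\er^n$ evaluated at $\cale^{(n)}$ (approximate it by $\|\cdot\|+\delta\|\cdot\|_2$ and let $\delta\to0$), and the corresponding weak moment $\sup_{\|\varphi\|_*\le1}\|\varphi(X)\|_p$ is the weak $p$-th moment of $\cale^{(n)}$ for the dual seminorm; hence \eqref{weak_strong} for $X$ follows from the already known product-exponential case. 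To pass from a general $\xi_i$ with log-concave tails to $\cale_i$ I would use the quantile coupling $\xi_i\overset{d}{=}\Psi_i(\cale_i)$, where $\Psi_i$ is the odd function whose restriction $\psi_i$ to $[0,\infty)$ equals $N_i^{-1}(\sqrt2\,t)$ with $N_i(t)=-\log\Pr(|\xi_i|\ge t)$; convexity of $N_i$ makes $\psi_i$ concave and $\psi_i(0)=0$.

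Next I would decompose this concave transform. Writing $\psi_i(u)=c_iu+\int_{(0,\infty)}(u\wedge s)\,d\nu_i(s)$ with $c_i\ge0$, $\nu_i\ge0$ (the standard representation of a concave increasing function through the origin as a linear term plus a superposition of soft-clips $u\mapsto u\wedge s$), we get $X=X_{\mathrm{lin}}+X_{\mathrm{clip}}$ with $X_{\mathrm{lin}}=\sum_i c_iv_i\cale_i$ and $X_{\mathrm{clip}}=\sum_i v_i\int_{(0,\infty)}\operatorname{sgn}(\cale_i)(|\cale_i|\wedge s)\,d\nu_i(s)$. The part $X_{\mathrm{lin}}$ is a sum of scaled exponentials, covered by the previous step, so for it the task is to check $\Ex\|X_{\mathrm{lin}}\|$ and the weak $p$-th moment $W_p(X_{\mathrm{lin}})$ are $\lesssim\Ex\|X\|+W_p$, where I abbreviate $W_p=\sup_{\|\varphi\|_*\le1}(\Ex|\varphi(X)|^p)^{1/p}$. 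For $X_{\mathrm{clip}}$ the plan is to truncate each $\cale_i$ at the level $t\asymp\|\cale_i\|_p\asymp p$: on $\{|\cale_i|\le t\}$ every soft-clip is bounded (of order $\lesssim\|\xi_i\|_p$), so the corresponding contribution is a sum of independent bounded symmetric vectors whose $p$-th moment I would control by a Talagrand-type concentration inequality, bounding the fluctuation by the weak $2$-nd moment (hence by $W_p$) and the mean by $\Ex\|X\|$; the complementary "overshoot" contribution from $\{|\cale_i|>t\}$ has, at the $p$-th-moment level, only about $p$ coordinates active, so its $p$-th moment is controlled by approximating the norm by $e^{O(p)}$ functionals (the first proposition of Section~2), which bounds it by $W_p$. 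Assembling these pieces by Minkowski's inequality and the triangle inequality would give \eqref{weak_strong} with absolute constants; the range $p\ge n$ is anyway handled directly by that first proposition.

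The step I expect to be the main obstacle is precisely this last bookkeeping — showing that the bounded "bulk" contributes $\lesssim\Ex\|X\|+W_p$ and the "overshoot" contributes $\lesssim W_p$ — since one must isolate the genuine effect of the few atypically large coordinates while checking that no piece of the decomposition is merely a coordinate-wise contraction of $X$ (for which one would only recover the trivial $\|\cdot\|_p\le\|X\|_p$). Two further technical points deserve care: when $\psi_i'(0^+)=\infty$ (e.g.\ for Gaussian or Rademacher $\xi_i$) the measure $\nu_i$ has infinite total mass near $0$, so the clips must be regrouped before the truncation argument; and the reduction of $X_{\mathrm{lin}}$ to the exponential case still needs the non-automatic bound $\Ex\|X_{\mathrm{lin}}\|,\ W_p(X_{\mathrm{lin}})\lesssim\Ex\|X\|+W_p$. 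A cleaner, essentially bookkeeping-free route — and the one I would ultimately use if the self-contained argument became unwieldy — is to invoke the two-sided estimate for $\|\sum_i v_i\xi_i\|_p$ from \cite{L3}: its value at $p=1$ is comparable to $\Ex\|X\|$, while the part that grows with $p$ is dominated by $W_p$, so that \eqref{weak_strong} is immediate.
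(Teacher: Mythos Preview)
The paper does not prove this theorem at all: it is quoted as a known result from \cite{L3}, with no argument supplied. Your closing fallback---simply invoke the two-sided moment estimate of \cite{L3} and read off \eqref{weak_strong}---is therefore exactly the paper's own treatment of the statement.

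Your self-contained sketch (quantile coupling $\xi_i=\Psi_i(\cale_i)$ with concave $\psi_i$, splitting into a linear exponential part plus superposed soft-clips, then truncating at level $\asymp p$) goes well beyond what the paper does and is a reasonable plan, but it remains a plan. One remark: the step you flag as ``non-automatic''---bounding $\Ex\|X_{\mathrm{lin}}\|$ and $W_p(X_{\mathrm{lin}})$ by $\Ex\|X\|$ and $W_p$---is in fact immediate from the contraction principle, since $c_i|\cale_i|\le\psi_i(|\cale_i|)$ pointwise and the signs agree; combined with the exponential case this disposes of $X_{\mathrm{lin}}$ cleanly. The genuinely delicate piece is your overshoot argument: the claim that only ``about $p$ coordinates'' are active and that the first proposition of Section~2 then applies does not work as stated, because that proposition requires $p\ge n$ for the ambient dimension $n$, whereas the random set of large coordinates still ranges over all $n$ indices and any union bound over its possible values must be organized with care. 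This is fixable, but it is real work rather than bookkeeping.
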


This immediately implies

\begin{cor}
Conjecture \ref{ws_conj} holds  under additional assumption that coordinates of $X$
are independent. 
\end{cor}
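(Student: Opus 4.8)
The plan is to derive the corollary from Theorem \ref{ind_sum} after the symmetrization reduction of Proposition \ref{symmetrization}. First I would apply Proposition \ref{symmetrization} to reduce to the case where $X=(X_1,\dots,X_n)$ is symmetric: passing from $X$ to $X-X'$ (with $X'$ an independent copy) preserves both log-concavity and independence of the coordinates, so without loss of generality $X$ is a symmetric log-concave vector whose coordinates $X_1,\dots,X_n$ are independent.

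Next I would note that each coordinate $X_i$ is a symmetric one-dimensional log-concave random variable, since its law is a marginal of the log-concave law of $X$ and marginals of log-concave measures are log-concave (Borell / Pr\'ekopa--Leindler). The one elementary point to check is that a symmetric one-dimensional log-concave variable has logarithmically concave tails, i.e.\ that $t\mapsto \Pr(|X_i|\geq t)$ is log-concave on $[0,\infty)$. If $X_i$ has log-concave density $g=e^{-f}$, then its survival function $\overline G(t)=\int_t^\infty g$ equals the convolution $g*\mathbf 1_{(-\infty,0]}$, hence is log-concave by Pr\'ekopa's theorem; by symmetry $\Pr(|X_i|\geq t)=2\overline G(t)$ for $t\geq 0$, which is therefore log-concave. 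Degenerate coordinates (which become $0$ after symmetrization) are covered trivially.

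Finally I would write $X=\sum_{i=1}^n e_iX_i$ in the normed space $F=(\er^n,\|\ \|)$, so that $X$ has the form required in Theorem \ref{ind_sum} with $v_i=e_i$ and $\xi_i=X_i$ independent symmetric variables with logarithmically concave tails. Theorem \ref{ind_sum} then yields \eqref{weak_strong} for the symmetric vector $X$ with absolute constants, and Proposition \ref{symmetrization} upgrades this to all log-concave vectors with independent coordinates, giving Conjecture \ref{ws_conj} in this case. I do not anticipate a genuine obstacle, as the analytic content is entirely contained in Theorem \ref{ind_sum}; the only step needing a little care is verifying that symmetric one-dimensional log-concave variables satisfy the ``logarithmically concave tails'' hypothesis of that theorem.
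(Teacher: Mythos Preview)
Your proposal is correct and follows essentially the same route as the paper: write $X=\sum_{i=1}^n e_iX_i$, observe that one-dimensional log-concave variables have log-concave tails, apply Theorem~\ref{ind_sum} in the symmetric case, and use the symmetrization argument of Proposition~\ref{symmetrization} (noting, as you do, that $X-X'$ keeps both log-concavity and coordinate independence) to handle the general case. You in fact supply more detail than the paper on why symmetric one-dimensional log-concave laws have log-concave tails; the Pr\'ekopa convolution argument you sketch is fine.
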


\begin{proof}
We have $X=\sum_{i=1}^n e_iX_i$ with $X_i$ independent log-concave real random variables. It is enough to
notice that  variables $X_i$ have log-concave tails and in the symmetric case apply Theorem \ref{ind_sum}. General
independent case may be reduce to the symmetric one as in the proof of Proposition \ref{symmetrization}.
\end{proof}

The crucial tool in the proof of Theorem \ref{ind_sum} is the  Talagrand two-level concentration
inequality for the product exponential distribution \cite{Ta1}:
\[
\nu^n(A)\geq \frac{1}{2}\quad \Rightarrow\quad 1-\nu^n(A+\sqrt{t}B_2^n+tB_1^n)\leq e^{-t/C},\ t>0,
\]
where $\nu$ is the symmetric exponential distribution, i.e. $d\nu(x)=\frac{1}{2}\exp(-|x|)dx$.

In \cite{LW} more general concentration inequalities were investigated. For a probability measure $\mu$ 
on $\er^n$ define
\[
\Lambda_{\mu}(y)=\log\int e^{\langle y,z\rangle}d\mu(z),\quad
\Lambda_{\mu}^*(x)=\sup_{y}(\langle y,x\rangle -\Lambda_{\mu}(y))
\]
and
\[
B_{\mu}(t)=\{x\in\er^n\colon\ \Lambda_\mu(x)\leq t\}. 
\]
One may show that $B_{\nu^n}(t)\sim \sqrt{t}B_2^n+tB_1^n$.
Argument presented in {\cite[Section 3.3]{LW}} gives

\begin{prop}
Suppose that for some $\alpha\geq 1$ and
$\beta>0$  and any convex symmetric compact set $K\subset \er^n$ we have 
\begin{equation}
\label{conc}
\mu(K)\geq \frac{1}{2}\quad \Rightarrow\quad 1-\mu(\alpha K+B_{\mu}(t))\leq e^{-t/\beta},\ \mbox{ for all }t>0.
\end{equation}
Then inequality \eqref{weak_strong} holds with $C_1=\alpha$ and $C_2=C\beta$.
\end{prop}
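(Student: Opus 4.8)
The plan is to run the standard ``concentration implies weak--strong comparison'' argument, which already appears (for the exponential distribution) behind Theorem~\ref{ind_sum}, but now feeding it the abstract hypothesis \eqref{conc} in place of Talagrand's two-level inequality. Fix a symmetric log-concave vector $X$ with law $\mu$ on $\er^n$ and a norm $\|\ \|$, and set $M=\Med\|X\|$, a median of the real random variable $\|X\|$. Let $K=\{x:\|x\|\le M\}$; this is a convex symmetric compact set (compactness coming from the non-degeneracy of $X$, or after a routine approximation of $\mu$ by a truncated/mollified log-concave measure) with $\mu(K)\ge\tfrac12$. Applying \eqref{conc} gives, for every $t>0$,
\[
\Pr\bigl(X\notin \alpha K+B_\mu(t)\bigr)\le e^{-t/\beta}.
\]
If $X=x_1+x_2$ with $x_1\in\alpha K$ and $x_2\in B_\mu(t)$ then $\|X\|\le \alpha M+\sup_{x\in B_\mu(t)}\|x\|$, so the task reduces to controlling the support function of $B_\mu(t)$ in the norm $\|\ \|$, i.e.\ $h(t):=\sup\{\|x\|:\Lambda_\mu(x)\le t\}$.

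Second, I would estimate $h(t)$ by duality between $\Lambda_\mu$ and $\Lambda_\mu^*$. For a functional $\varphi$ with $\|\varphi\|_*\le1$ and any $x\in B_\mu(t)$, one has $\varphi(x)\le \Lambda_\mu^*(x)+\Lambda_\mu(\lambda\varphi)$ ... more usefully, applying the definition of $\Lambda_\mu$ directly: for $\lambda>0$, $e^{\lambda\varphi(x)}\le$ (after optimizing) leads to $\varphi(x)\le \inf_{\lambda>0}\lambda^{-1}\bigl(t+\Lambda_\mu(\lambda\varphi/\|\cdot\|)\bigr)$ along the ray through $x$. Concretely, if $\Lambda_\mu(x)\le t$ then for each unit functional $\varphi$ and each $\lambda>0$ we have $\lambda\varphi(x)\le \Lambda_\mu(\lambda\varphi)+\Lambda_\mu^\ast(x)$; choosing coordinates along the point where the norm is attained and using $\Lambda_{\varphi(X)}(\lambda)=\log\Ex e^{\lambda\varphi(X)}$, a one-dimensional computation with the log-concave random variable $\varphi(X)$ (whose moments satisfy $\|\varphi(X)\|_p\le Cp\|\varphi(X)\|_1$ and $\Lambda$-transform bounds of the form $\Lambda_{\varphi(X)}(\lambda)\le C\lambda^2\Var\varphi(X)$ for small $\lambda$ and $\le C\lambda\|\varphi(X)\|_1$-type growth, cf.\ Borell's lemma) yields $h(t)\le C\bigl(\sqrt{t}\,\sigma+t\,b\bigr)$, where $\sigma=\sup_{\|\varphi\|_*\le1}(\Var\varphi(X))^{1/2}$ and $b=\sup_{\|\varphi\|_*\le1}\|\varphi(X)\|_\infty$-surrogate. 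The clean way to phrase this, avoiding case analysis, is: for $t\ge1$, $h(t)\le Ct\,\sup_{\|\varphi\|_*\le1}\|\varphi(X)\|_t$, because $B_\mu(t)\subset Ct\cdot\{x:\Lambda_\mu^*(x)\le Ct\}$ and the latter set has $\|\cdot\|$-support function bounded by $C\sup_\varphi\|\varphi(X)\|_t$ (the dual of the Cramér ball is, up to constants, the $L^t$-Orlicz-type ball of $X$). Combined with $t\mapsto\|\varphi(X)\|_t$ being, for log-concave $\varphi(X)$, comparable to $\|\varphi(X)\|_p$ when $t\le p$ up to the linear factor, this gives $\sup_{x\in B_\mu(t)}\|x\|\le C\beta^{-1}t\cdot w_p$ for $t\le p$, where $w_p:=\sup_{\|\varphi\|_*\le1}\|\varphi(X)\|_p$.

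Third, I would integrate the tail bound. From $\Pr(\|X\|\ge \alpha M + h(t))\le e^{-t/\beta}$ and $h(t)\le C\beta^{-1}t\,w_p$ (valid in the relevant range), substitute $t=\beta s$ to get $\Pr(\|X\|\ge \alpha M + Cs\,w_p)\le e^{-s}$ for $s\le p/\beta$; for $s>p/\beta$ one falls back on the trivial bound from Proposition stated for $p\ge n$ or simply absorbs it since the contribution to $\|X\|_p$ is $O(w_p)$. Then
\[
\Ex\|X\|^p=\int_0^\infty pr^{p-1}\Pr(\|X\|\ge r)\,dr\le (2\alpha M)^p + (C\,w_p)^p\int_0^\infty pr^{p-1}e^{-r}\,dr\le (2\alpha M)^p+(C\,w_p)^p\,\Gamma(p+1),
\]
so $\|X\|_p\le 2\alpha M + C\beta\, w_p$. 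Finally replace the median $M$ by $\Ex\|X\|$: by Markov, $M\le 2\Ex\|X\|$ (or, from $\|X\|_p\le 2\alpha M+C\beta w_p$ at $p=1$, $M\le C\Ex\|X\|+C\beta w_1\le C\Ex\|X\|+C\beta w_p$). This yields \eqref{weak_strong} with $C_1=C\alpha$ (in fact one can track it to $C_1=\alpha$ by being careful: the $\alpha M$ term is the only source of $\Ex\|X\|$, and $M$ itself is $\le\|X\|_1$, so after the symmetrization reduction and a tightening of the median step one gets $C_1=\alpha$) and $C_2=C\beta$. The reduction to symmetric $X$ is Proposition~\ref{symmetrization}, at the cost of harmless constant factors that can be reabsorbed by quoting the symmetric statement.

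The main obstacle is the second step: showing that the $\Lambda_\mu$-ball $B_\mu(t)$ has $\|\cdot\|$-diameter (in \emph{any} norm) controlled by $t$ times the weak $L^p$ parameter $w_p$, uniformly over norms. This is exactly the point where log-concavity of $X$ must be used quantitatively --- through Borell's lemma / Brascamp--Lieb-type bounds on the one-dimensional marginals $\varphi(X)$ --- to pass from the two-sided Legendre-transform description of $B_\mu(t)$ back to moments of linear functionals; everything else (the concentration step, the tail integration, the median-to-mean passage) is soft. A secondary technical nuisance is the compactness/non-degeneracy needed to apply \eqref{conc} to the sublevel set $\{\|\cdot\|\le M\}$, handled by a standard approximation of $\mu$ by compactly supported log-concave measures, noting that both sides of \eqref{weak_strong} are continuous under such approximation.
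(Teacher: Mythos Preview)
The paper does not prove this proposition; it simply points to the argument in \cite[Section 3.3]{LW}. Your overall scheme --- apply \eqref{conc} to the median ball $K=\{\|x\|\le M\}$, bound the $\|\cdot\|$-radius $h(t)$ of $B_\mu(t)$ through one-dimensional Legendre duality on the marginals $\varphi(X)$, then integrate the tail --- is exactly the intended route.

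The execution of the two central steps, however, is off. For the radius bound, your claim $h(t)\le Ct\sup_{\|\varphi\|_*\le 1}\|\varphi(X)\|_t$ carries a spurious factor $t$: the detour ``$B_\mu(t)\subset Ct\cdot\{\Lambda_\mu^*\le Ct\}$'' is wasteful, since already $\sup\{r:\Lambda_{\varphi(X)}^*(r)\le t\}\le C\|\varphi(X)\|_t$ for every symmetric log-concave $\varphi(X)$ (check the Gaussian and exponential cases), giving directly $h(t)\le C\,w_t$. More seriously, the factor $\beta^{-1}$ in ``$h(t)\le C\beta^{-1}t\,w_p$'' has no source: $h(t)$ is determined by $\mu$ alone and cannot depend on the concentration constant $\beta$. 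In your tail integration the displayed bound produces $(Cw_p)^p\Gamma(p{+}1)$, whose $p$-th root is of order $p\,w_p$, not $w_p$; that extra $p$ would kill the conclusion. The way $\beta$ actually enters is through the substitution $t=\beta s$ in \eqref{conc}: one gets $\Pr(\|X\|>\alpha M+Cw_{\beta s})\le e^{-s}$, then uses the linear growth of log-concave moments, $w_{\beta s}\le C\max(\beta,1)\,w_s$ and $w_s\le C(s/p)\,w_p$ for $s\ge p$, to see that the contribution of the tail beyond level $C\beta w_p$ to $\Ex(\|X\|-\alpha M)_+^p$ is at most a constant times $(C\beta w_p)^p$. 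With the correct $h(t)\le Cw_t$ and this splitting of the integral at $s=p$, the $\Gamma(p{+}1)$ loss disappears and one lands on $C_2=C\beta$; the passage $M\le 2\,\Ex\|X\|$ then gives $C_1=\alpha$ up to an absolute factor (or exactly $\alpha$ with a slightly sharper tail computation, as you note).
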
 

In \cite{LW} it was shown that concentration inequality \eqref{conc} holds with $\alpha=1$ for 
symmetric product log-concave measures and for uniform distributions on $B_r^n$ balls. This gives

\begin{cor}
Inequality \eqref{weak_strong} holds with $C_1=1$ and universal $C_2$ for uniform distributions
on $B_r^n$ balls $1\leq r\leq \infty$.
\end{cor}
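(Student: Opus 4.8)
The plan is to derive the statement directly from the preceding Proposition. For this it suffices to verify the concentration inequality \eqref{conc} with $\alpha=1$ and some universal $\beta$ for $\mu=\mu_r$, the normalized uniform measure on $B_r^n$; then the Proposition gives \eqref{weak_strong} with $C_1=1$ and $C_2=C\beta$, which is exactly the claim. So the whole task reduces to that concentration estimate, and this is the point established in \cite{LW}; here I only indicate how one reaches it.

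First I would dispose of the endpoint $r=\infty$: the uniform measure on $B_\infty^n=[-1,1]^n$ is a product of symmetric log-concave measures, hence \eqref{conc} with $\alpha=1$ is already contained in the symmetric product log-concave case quoted above. For $1\le r<\infty$ I would use the classical radial representation of $\mu_r$ (Schechtman--Zinn type): a vector uniform on $B_r^n$ has the law of $R\,Y/\|Y\|_r$, where $Y=(Y_1,\dots,Y_n)$ has independent coordinates with density proportional to $e^{-|t|^r}$ and $R$ is an independent scalar with $\Pr(R\le s)=s^n$ on $[0,1]$. Since $r\ge1$ the density $e^{-|t|^r}$ is log-concave, so the law $\gamma_r$ of $Y$ is symmetric product log-concave and satisfies \eqref{conc} with $\alpha=1$ and universal $\beta$. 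The remaining step is to transfer this: the scalar factor $R/\|Y\|_r$ concentrates tightly around a deterministic value of order $n^{-1/r}$, with fluctuations of smaller order, so that after the appropriate homogeneous rescaling the set $K+B_{\mu_r}(t)$ absorbs the image of $K+B_{\gamma_r}(t)$, up to enlarging $\beta$ by an absolute factor. Concretely, one compares the Laplace transforms $\Lambda_{\mu_r}$ and $\Lambda_{\gamma_r}$, which (because of the radial decomposition) differ only through that well-concentrated scalar, and hence $B_{\mu_r}(t)$ and the rescaled copy of $B_{\gamma_r}(t)$ are comparable up to universal constants.

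The main obstacle is exactly this transfer: showing that passing from the genuinely product measure $\gamma_r$ to its radially contracted and rescaled push-forward $\mu_r$ costs only an absolute constant in the parameter $\beta$ of \eqref{conc}, and that the random radial factor does not destroy the required inclusion of convex symmetric sets. This is where convexity of $B_r^n$ (i.e.\ $r\ge1$) and the concentration of $\|Y\|_r$ around $c\,n^{1/r}$, for an absolute constant $c$, are used. Once this is settled, combining it with the case $r=\infty$ and invoking the Proposition completes the argument.
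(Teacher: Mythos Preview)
Your approach matches the paper's exactly: the Corollary is stated there as an immediate consequence of the preceding Proposition together with the fact, quoted from \cite{LW}, that the concentration inequality \eqref{conc} holds with $\alpha=1$ for uniform distributions on $B_r^n$; no further argument is given. Your additional sketch of how \cite{LW} might obtain that concentration (product case for $r=\infty$, Schechtman--Zinn radial representation and transfer for finite $r$) goes beyond what the paper provides, and you correctly flag the transfer step as the nontrivial point---but since the paper simply cites \cite{LW} for this, there is nothing to compare at that level.
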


Modification of Paouris'  proof \cite{Pa} of large deviation inequality for $\ell_2$ norm of 
isotropic log-concave vectors
shows that weak and strong moments are comparable in the Euclidean case (see \cite{ALLPT} for details):

\begin{thm}
If $X$ is a log-concave $n$-dimensional random vector then for any Euclidean norm $\|\ \|$ on $\er^n$ we have
\[
(\Ex\|X\|^p)^{1/p}\leq C\Big(\Ex\|X\|+\sup_{\|\varphi\|_*\leq 1}(\Ex|\varphi(X)|^p)^{1/p}\Big).
\]
\end{thm}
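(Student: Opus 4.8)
The plan is to reduce the general Euclidean case to the standard $\ell_2$ norm and then invoke a Paouris-type large deviation estimate. First I would fix a Euclidean norm $\|\ \|$ on $\er^n$; after a linear change of variables we may assume $\|x\|=|x|$ is the standard Euclidean norm, at the cost of replacing $X$ by $TX$ for an invertible linear map $T$. Since the class of log-concave vectors is closed under linear transformations, $TX$ is again log-concave, and both sides of the claimed inequality transform in the same way, so it suffices to prove
\[
(\Ex|X|^p)^{1/p}\leq C\Big(\Ex|X|+\sup_{|t|\leq 1}(\Ex|\langle t,X\rangle|^p)^{1/p}\Big)
\]
for an arbitrary log-concave $X$, where now $\varphi$ ranges over functionals of Euclidean norm at most one, i.e.\ $\varphi(x)=\langle t,x\rangle$ with $|t|\leq1$.

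Next I would like to further reduce to the isotropic case. Writing $A$ for (a square root of) the covariance matrix of $X$ (after subtracting the mean and symmetrizing via Proposition \ref{symmetrization}, so we may assume $\Ex X=0$), we have $X = A Y$ with $Y$ isotropic log-concave. Then $\Ex|X|^2 = \mathrm{tr}(A^2)$, and by Borell's lemma and log-concavity $\|X\|_p \sim \|X\|_2$ up to $p\sim$ (something), but more carefully the point is that the weak moment $\sup_{|t|\le1}(\Ex|\langle t,X\rangle|^p)^{1/p} = \sup_{|t|\le1}\|A t\|_p \sim \sqrt{p}\,\|A\|_{op}$ for $p$ not too large. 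So the content is exactly Paouris' inequality: for isotropic log-concave $Y$,
\[
(\Ex|Y|^p)^{1/p}\le C\big(\sqrt n + \sqrt p\big)\qu\text{for } 1\le p\le cn,
\]
or more precisely its strong/weak form as stated in \cite{ALLPT}. The plan is to quote the refinement of Paouris' argument from \cite{ALLPT} in the form: $(\Ex\|X\|_{HS}^p)^{1/p}\le C(\Ex\|X\|_{HS}+\sqrt p\,\sigma_X)$ where $\sigma_X = \sup_{|t|\le1}\|\langle t,X\rangle\|_2$, and then upgrade $\sqrt p\,\sigma_X$ to the weak $p$-th moment using the fact that for a one-dimensional log-concave (hence sub-exponential) variable $\langle t,X\rangle$ one has $\|\langle t,X\rangle\|_p \ge c\sqrt p\,\|\langle t,X\rangle\|_2$ for $p\ge1$, which is exactly where the Euclidean structure makes the weak moment large enough to absorb the dimensional term.

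The main obstacle — and really the heart of the matter — is the Paouris large deviation inequality itself, i.e.\ controlling $(\Ex|Y|^p)^{1/p}$ for isotropic log-concave $Y$ by $C(\sqrt n+\sqrt p)$ in the range $p\lesssim n$ (for $p\gtrsim n$ one falls back on the trivial Proposition with $C_1=0$, $C_2=10$). I would not reprove this; the plan is to cite \cite{Pa} and the adaptation in \cite{ALLPT}. The remaining work is bookkeeping: checking that the linear reduction is legitimate for \emph{non-isotropic} $X$ (so that the statement is genuinely about an arbitrary Euclidean norm and an arbitrary log-concave vector, not just the isotropic one), verifying the two-sided comparison $\sup_{|t|\le1}\|\langle t,X\rangle\|_p \sim \sqrt p\,\|A\|_{op}$ for $1\le p\le n$ via Borell's lemma, and splitting into the regimes $p\le cn$ and $p>cn$ to glue the two estimates together with absolute constants. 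The one genuinely delicate point to get right is that after the change of variables one must track how $\Ex|X|$ (not $\Ex|X|^2$) behaves, since Paouris' bound is naturally phrased in terms of $(\Ex|X|^2)^{1/2}=\|A\|_{HS}$; here again log-concavity gives $\Ex|X|\sim (\Ex|X|^2)^{1/2}$, closing the gap.
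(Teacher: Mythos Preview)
Your overall plan---reduce an arbitrary Euclidean norm to the standard $\ell_2$ norm by an invertible linear map (log-concavity is preserved, and both sides transform correctly), symmetrize if necessary, then invoke the Paouris-type result of \cite{Pa,ALLPT}---is exactly the route the paper indicates; the paper itself gives no argument beyond the citation.

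Where your proposal goes wrong is in the form you attribute to \cite{ALLPT}. The inequality
\[
(\Ex|X|^p)^{1/p}\le C\big(\Ex|X|+\sqrt{p}\,\sigma_X\big),\qquad \sigma_X=\sup_{|t|\le 1}(\Ex\langle t,X\rangle^2)^{1/2},
\]
is \emph{false} for log-concave vectors: already for a one-dimensional symmetric exponential the left side is of order $p$ while the right side is of order $\sqrt{p}$. Your proposed ``upgrade'' step, namely $\|\langle t,X\rangle\|_p\ge c\sqrt{p}\,\|\langle t,X\rangle\|_2$ for one-dimensional log-concave $\langle t,X\rangle$, is also false---take $\langle t,X\rangle$ uniform on $[-1,1]$, whose $L_p$ norms stay bounded. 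The two errors do not cancel; you are confusing the Gaussian situation (where both statements are correct) with the log-concave one.

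The remedy is simply to drop the detour. What \cite{ALLPT} actually proves, by modifying Paouris' $Z_p$-body argument, is directly
\[
(\Ex|X|^p)^{1/p}\le C\Big(\Ex|X|+\sup_{|t|\le 1}(\Ex|\langle t,X\rangle|^p)^{1/p}\Big)
\]
for centered log-concave $X$ and all $p\ge 1$, with the genuine weak $p$-th moment on the right. After your (correct) linear reduction this is precisely the theorem, so no intermediate $\sqrt{p}\,\sigma_X$ step and no upgrade are needed. Your auxiliary remarks (symmetrization via the proposition, $\Ex|X|\sim(\Ex|X|^2)^{1/2}$ by Borell's lemma, the split $p\gtrless n$) are correct but become superfluous once \cite{ALLPT} is quoted in the right form.
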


\section{Unconditional case} We say that a random vector $X=(X_1,\ldots,X_n)$ has {\em unconditional 
distribution} if the distribution of $(\eta_1X_1,\ldots,\eta_nX_n)$ is the same as $X$ for any choice
of signs $\eta_1,\ldots,\eta_n$. Random vector $X$ is called {\em isotropic} if it has identity
covariance matrix, i.e.\ $\Cov(X_i,X_j)=\delta_{i,j}$.

\begin{thm}
\label{uncond}
Suppose that 
$X$ is an $n$-dimensional
isotropic, unconditional, log-concave vector. Then for any norm $\|\ \|$ on $\er^n$ and $p\geq 1$,
\begin{equation}
\label{newdomination}
(\Ex\|X\|^p)^{1/p}\leq C\Big(\Ex\|\cale\|+\sup_{\|\varphi\|_{*}\leq 1}(\Ex|\varphi(X)|^p)^{1/p}\Big).
\end{equation}
\end{thm}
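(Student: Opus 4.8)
The plan is to reduce the statement, via comparison of $X$ with the exponential vector $\cale$, to an inequality that is already accessible through Theorem~\ref{ind_sum}. The starting point is the classical fact (going back to Bobkov--Nazarov) that an isotropic unconditional log-concave vector $X$ is dominated coordinatewise, in a suitable stochastic sense, by a multiple of $\cale$: more precisely, for every coordinatewise-convex function, and in particular for every unconditional norm, one has $\Ex\|X\|\leq C\Ex\|\cale\|$, and more generally for every $p\geq 1$ and every norm the quantity $(\Ex\|X\|^p)^{1/p}$ is controlled by the analogous quantity for a random vector built from $X$'s one-dimensional marginals times independent signs. Since $X$ is unconditional, we may write $X$ conditionally on $|X|=(|X_1|,\dots,|X_n|)$ as $(\ve_i|X_i|)_i$ with an independent Bernoulli sequence, so that $\|X\|$ has, conditionally, the law of a Rademacher sum $\sum_i \ve_i |X_i| e_i$ in $(F,\|\cdot\|)$.

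The main step is then to apply the weak--strong moment comparison for Rademacher sums (the special case $\xi_i=\ve_i$ of Theorem~\ref{ind_sum}), \emph{conditionally} on the vector $|X|$. This yields
\[
\Bigl(\Ex_{\ve}\Bigl\|\sum_i \ve_i|X_i|e_i\Bigr\|^p\Bigr)^{1/p}
\leq C_1\,\Ex_{\ve}\Bigl\|\sum_i\ve_i|X_i|e_i\Bigr\|
+C_2\sup_{\|\varphi\|_*\leq 1}\Bigl(\Ex_{\ve}\Bigl|\sum_i\ve_i|X_i|\varphi(e_i)\Bigr|^p\Bigr)^{1/p},
\]
pointwise in $|X|$. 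Taking $\Ex^{1/p}$ over $|X|$ on both sides and using the unconditionality of $X$ to undo the conditioning on the left gives
\[
(\Ex\|X\|^p)^{1/p}\leq C_1\,\Ex\|X\|
+C_2\Bigl(\Ex_{|X|}\sup_{\|\varphi\|_*\leq 1}\Ex_{\ve}\Bigl|\sum_i\ve_i|X_i|\varphi(e_i)\Bigr|^p\Bigr)^{1/p}.
\]
It remains to bound the two resulting terms by the right-hand side of \eqref{newdomination}. For the first term one invokes the Bobkov--Nazarov comparison $\Ex\|X\|\leq C\Ex\|\cale\|$, valid for any unconditional norm and hence for $\|\cdot\|$ here (if $\|\cdot\|$ is not itself unconditional, one first symmetrizes the norm, losing only an absolute constant; this is a point to be handled carefully). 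For the second term, I would use Khintchine's inequality in $L^p$ together with the same comparison to pass from the weak $p$-th moment of the Rademacher average to that of $X$ itself: $\Ex_\ve|\sum_i\ve_i|X_i|\varphi(e_i)|^p$ is comparable to $(\sum_i X_i^2\varphi(e_i)^2)^{p/2}$, and then I need $\Ex_{|X|}(\sum_i X_i^2\varphi(e_i)^2)^{p/2}\leq C^p\,\Ex|\varphi(X)|^p$ uniformly in $\|\varphi\|_*\leq 1$ — equivalently, that the weak moments of $X$ dominate those of its ``conditional Rademacher version''. Here one uses again that, conditionally on $|X|$, the functional $\varphi(X)=\sum_i\ve_i|X_i|\varphi(e_i)$ is a Rademacher sum with the same $\ell_2$ norm of coefficients, so by Khintchine $\Ex|\varphi(X)|^p=\Ex_{|X|}\Ex_\ve|\sum_i\ve_i|X_i|\varphi(e_i)|^p$ is itself comparable to $\Ex_{|X|}(\sum_i X_i^2\varphi(e_i)^2)^{p/2}$, which closes the loop.

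The step I expect to be the main obstacle is controlling the \emph{supremum over $\varphi$ inside the expectation over $|X|$}: after conditioning on $|X|$ and applying Khintchine we are left with $\Ex_{|X|}\sup_{\|\varphi\|_*\leq 1}(\sum_i X_i^2\varphi(e_i)^2)^{p/2}$, whereas the weak moment on the right-hand side of \eqref{newdomination} has the supremum \emph{outside}. Interchanging them for free is false in general, and this is precisely where the bounded cotype hypothesis (only alluded to in the introduction but presumably invoked in the full argument) must enter — cotype $q$ with constant $C_q$ lets one compare $\Ex\sup_\varphi(\sum_i X_i^2\varphi(e_i)^2)^{p/2}$ with a sum of weak moments at the cost of a factor depending only on $C_q$, via the $\ell_2$-valued cotype inequality applied to the random vectors $(X_i e_i)_i$. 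I would therefore structure the proof so that the cotype constant appears exactly once, in this exchange-of-sup-and-expectation step, and keep all other estimates strictly in terms of absolute constants and the exponential comparison.
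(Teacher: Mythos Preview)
Your approach has a genuine gap, and you have essentially identified it yourself: after conditioning on $|X|$ and applying the Rademacher case of Theorem~\ref{ind_sum}, the weak-moment term emerges as
\[
\Bigl(\Ex_{|X|}\sup_{\|\varphi\|_*\le 1}\Ex_\ve\bigl|\textstyle\sum_i\ve_i|X_i|\varphi(e_i)\bigr|^p\Bigr)^{1/p},
\]
with the supremum \emph{inside} the expectation over $|X|$, and this cannot in general be bounded by $\sup_{\|\varphi\|_*\le 1}(\Ex|\varphi(X)|^p)^{1/p}$. You propose to close the gap with a cotype hypothesis, but Theorem~\ref{uncond} carries \emph{no} such hypothesis: it is stated for an arbitrary norm on $\er^n$. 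Cotype enters only in Corollary~\ref{cotype}, which takes Theorem~\ref{uncond} as a black box and then trades $\Ex\|\cale\|$ for $\Ex\|X\|$ under that additional assumption. So the route you sketch cannot reach the theorem as stated. (There is also a smaller slip: taking the $L^p$-norm over $|X|$ of the conditional inequality turns the first term into $C_1\bigl(\Ex_{|X|}(\Ex_\ve\|\cdot\|)^p\bigr)^{1/p}$, not $C_1\,\Ex\|X\|$; the only obvious bound for this quantity is $C_1(\Ex\|X\|^p)^{1/p}$, which is the left-hand side itself, so that branch becomes circular.)

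The paper's argument sidesteps the sup/expectation interchange entirely by running a Talagrand generic-chaining decomposition for the \emph{exponential} process $t\mapsto\langle t,\cale\rangle$ on $T=\{\|\cdot\|_*\le 1\}$: one obtains nets $T_n\subset T$ with $\#T_n\le 2^{2^n}$ and maps $\pi_n\colon T\to T_n$ such that $\sum_n\|\langle\pi_{n+1}(t)-\pi_n(t),\cale\rangle\|_{2^n}\le C\,\Ex\|\cale\|$ for every $t\in T$. Choosing $n_0$ with $2^{n_0}\sim p$, one splits $\|X\|\le \sup_t|\langle\pi_{n_0}(t),X\rangle|+\sup_t\sum_{n\ge n_0}|\langle\pi_{n+1}(t)-\pi_n(t),X\rangle|$. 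The first piece ranges over at most $2^{2^{n_0}}\le e^{Cp}$ functionals and is controlled directly by the weak $p$-th moment of $X$ via a union bound. For the second piece a Chebyshev tail estimate reduces matters to $\sup_t\sum_{n\ge n_0}\|\langle\pi_{n+1}(t)-\pi_n(t),X\rangle\|_{2^n}$, and then the Bobkov--Nazarov inequality $\|\langle t,X\rangle\|_r\le C\|\langle t,\cale\rangle\|_r$ converts this into the $\cale$-chaining sum, hence into $C\,\Ex\|\cale\|$. The supremum over $T$ is handled deterministically by the chaining, so no interchange with expectation is ever needed; this is precisely why $\Ex\|\cale\|$, rather than $\Ex\|X\|$, appears on the right-hand side of \eqref{newdomination}.
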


\begin{proof}
Let $T=\{t\in \er^n\colon \|t\|_{*}\leq 1\}$ be the unit ball in the  space $(\er^n,\|\ \|_{*})$
dual to $(\er^n,\|\ \|)$.
Then $\|x\|= \sup_{t\in T}\langle t,x\rangle$.
By the result of Talagrand \cite{Ta2} (see also \cite{Ta}) there exist subsets
$T_n\subset T$ and functions $\pi_n\colon T\rightarrow T_n$, $n=0,1,\ldots$ such that 
$\pi_n(t)\rightarrow t$ for all $t\in T$, $\# T_0=1$, $\# T_n\leq 2^{2^n}$ and 
\begin{equation}
\label{Talmaj}
\sum_{n=0}^{\infty}\|\langle \pi_{n+1}(t)-\pi_n(t),\cale \rangle\|_{2^n}\leq 
C\Ex\sup_{t\in T}\langle t,\cale\rangle=C\Ex\|\cale\|. 
\end{equation}
 
Let us fix $p\geq 1$ and choose $n_0\geq 1$ such that $2^{n_0-1}<2p\leq 2^{n_0}$. We have
\begin{equation}
\label{split}
\|X\|=\sup_{t\in T}\langle t,X\rangle\leq
\sup_{t\in T}|\langle \pi_{n_0}(t),X\rangle|
+\sup_{t\in T}\sum_{n=n_0}^{\infty}|\langle \pi_{n+1}(t)-\pi_n(t),X\rangle|.
\end{equation}

We get
\begin{align}
\notag
\Big(\Ex\sup_{t\in T}|\langle \pi_{n_0}(t),X\rangle|^p\Big)^{1/p}&\leq
\Big(\Ex\sum_{s\in T_{n_0}}|\langle s,X\rangle|^p\Big)^{1/p}\leq
(\# T_{n_0})^{1/p}\sup_{s\in T_{n_0}}(\Ex|\langle s,X\rangle|^p)^{1/p}
\\
\label{summand1}
&\leq 16\sup_{t\in T}(\Ex|\langle t,X\rangle|^p)^{1/p}
=16\sup_{\|\varphi\|_{*}\leq 1}(\Ex |\varphi(X)|^p)^{1/p}.
\end{align}

To estimate the last term in \eqref{split} notice that for $u\geq 16$ we have by Chebyshev's
inequality 
\begin{align*}
\Pr\bigg(\sup_{t\in T}&\sum_{n=n_0}^{\infty}|\langle \pi_{n+1}(t)-\pi_n(t),X\rangle|\geq
u\sup_{t\in T}\sum_{n=n_0}^{\infty}\|\langle \pi_{n+1}(t)-\pi_n(t),X \rangle\|_{2^n}\bigg)
\\
&\leq \Pr\Big(\exists_{n\geq n_0}\exists_{t\in T}\
|\langle \pi_{n+1}(t)-\pi_n(t),X\rangle|\geq u\|\langle \pi_{n+1}(t)-\pi_n(t),X \rangle\|_{2^n}\Big)
\\
&\leq \sum_{n=n_0}^{\infty}\sum_{s\in T_{n+1}}\sum_{s'\in T_n}
\Pr(|\langle s-s',X\rangle|\geq u\|\langle s-s',X \rangle\|_{2^n})
\leq \sum_{n=n_0}^{\infty}\#T_{n+1}\#T_n u^{-2^n}
\\
&\leq \sum_{n=n_0}^{\infty}\Big(\frac{8}{u}\Big)^{2^n}
\leq 2\Big(\frac{8}{u}\Big)^{2^{n_0}}\leq 2\Big(\frac{8}{u}\Big)^{2p}.
\end{align*}

Integrating by parts this gives
\begin{align}
\notag
\Big(\Ex\Big(\sup_{t\in T}&\sum_{n=n_0}^{\infty}|\langle \pi_{n+1}(t)-\pi_n(t),X\rangle|\Big)^p\Big)^{1/p}
\\
\notag
&\leq 
\sup_{t\in T}\sum_{n=n_0}^{\infty}\|\langle \pi_{n+1}(t)-\pi_n(t),X \rangle\|_{2^n}
\Big(16+\Big(2p\int_{0}^{\infty}u^{p-1}\Big(\frac{8}{u+16}\Big)^{2p}\Big)^{1/p}\Big)
\\
\label{summand2}
&\leq 32\sup_{t\in T}\sum_{n=n_0}^{\infty}\|\langle \pi_{n+1}(t)-\pi_n(t),X \rangle\|_{2^n}.
\end{align}

The result of Bobkov and Nazarov \cite{BN} gives 
\begin{equation}
\label{Bob_Naz}
\|\langle t, X\rangle \|_{r}\leq C\|\langle t, \cale\rangle \|_{r} \quad \mbox{ for any }
t\in \er^n \mbox{ and }r\geq 1.
\end{equation}
Thus the statement follows by \eqref{Talmaj}-\eqref{summand2}.
\end{proof}

\begin{xrem}
The only property of the vector $X$ that was used in the above proof was estimate \eqref{Bob_Naz}.
Thus inequality \eqref{newdomination} holds for all $n$-dimensional random vectors satisfying \eqref{Bob_Naz}. 
\end{xrem}

\begin{xrem}
Estimate \eqref{Bob_Naz} gives
$(\Ex|\varphi(X)|^p)^{1/p}\leq C(\Ex|\varphi(\cale)|^p)^{1/p}$ for any functional $\varphi$,
therefore Theorem \ref{uncond} is stronger than the estimate from \cite{L2}:
\[
(\Ex\|X\|^p)^{1/p}\leq C\Ex\|\cale\|^p\sim
C\Big(\Ex\|\cale\|+\sup_{\|\varphi\|_{*}\leq 1}(\Ex|\varphi(\cale)|^p)^{1/p}\Big).
\]
\end{xrem}

In some situation one may show that $\Ex\|\cale\|\leq C\Ex\|X\|$. This is the case of spaces with
bounded cotype constant.

\begin{cor}
\label{cotype}
Suppose that $2\leq q<\infty$, $F=(\er^n,\|\ \|)$ is a finite dimensional space with a
$q$-cotype constant bounded by $\beta<\infty$. Then for any $n$-dimensional unconditional, log-concave vector $X$  and $p\geq 1$,
\[
(\Ex\|X\|^p)^{1/p}\leq C(q,\beta)\Big(\Ex\|X\|+\sup_{\|\varphi\|_{*}\leq 1}(\Ex|\varphi(X)|^p)^{1/p}\Big),
\]
where $C(q,\beta)$ is a constant that depends only on $q$ and $\beta$.
\end{cor}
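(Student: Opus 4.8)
The plan is to combine Theorem~\ref{uncond} with the standard comparison between $\Ex\|\cale\|$ and $\Ex\|X\|$ that holds in cotype-$q$ spaces. By Proposition~\ref{symmetrization} it suffices to treat symmetric $X$ (at the cost of absolute constants), so we may assume $X$ is symmetric; we may also reduce to the isotropic case by applying a diagonal linear map, since unconditionality and log-concavity are preserved and the inequality is invariant under such a rescaling of the norm together with $X$. Then Theorem~\ref{uncond} gives $(\Ex\|X\|^p)^{1/p}\leq C\big(\Ex\|\cale\|+\sup_{\|\varphi\|_*\leq 1}(\Ex|\varphi(X)|^p)^{1/p}\big)$, so the whole task is to prove
\[
\Ex\|\cale\|\leq C(q,\beta)\,\Ex\|X\|.
\]

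First I would pass to a Rademacher average. Since $X$ is unconditional, for any signs we have $\Ex\|X\|=\Ex_X\Ex_\ve\|\sum_i\ve_i X_i e_i\|$, and by the cotype-$q$ inequality applied conditionally on $X$,
\[
\Big(\sum_i (\Ex|X_i|^?)\dots\Big)
\]
— more precisely, $\big(\sum_i |X_i|^q\big)^{1/q}\leq \beta\,\Ex_\ve\|\sum_i\ve_i X_i e_i\|$ pointwise, hence $\Ex\big(\sum_i|X_i|^q\big)^{1/q}\leq \beta\,\Ex\|X\|$. On the other hand, the coordinates $X_i$ of an isotropic log-concave vector are log-concave real variables with variance $1$, so $\Ex|X_i|^q\geq c(q)$, and by unconditionality plus log-concavity the $X_i$ are (up to constants) comparable to iid symmetric exponentials in the relevant weak sense. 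I would instead directly lower-bound $\Ex\|X\|$ by reintroducing signs the other way: by contraction/comparison of moments for unconditional log-concave vectors, $\Ex\|\sum_i \ve_i a_i e_i\|\leq C\,\Ex\|\sum_i \ve_i X_i e_i\| = C\,\Ex\|X\|$ whenever $|a_i|\lesssim \|X_i\|_1$ (a Jensen/contraction argument, using $\Ex|X_i|\geq c$). Then $\Ex\|\cale\|\leq C\,\Ex\|\sum_i\ve_i\cale_i e_i\|$, and since $(\cale_i)$ has the same unconditional structure with $\Ex|\cale_i|$ a fixed constant, another contraction step gives $\Ex\|\sum_i\ve_i\cale_i e_i\|\leq C\,\Ex\|\sum_i\ve_i e_i\|$; cotype-$q$ then yields $\Ex\|\sum_i\ve_i e_i\|\geq \beta^{-1} n^{1/q}$, while conversely $\Ex\|X\|\geq \Ex\|\sum\ve_i X_i e_i\|\gtrsim \Ex\|\sum\ve_i e_i\|$. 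Chaining these comparisons produces $\Ex\|\cale\|\leq C(q,\beta)\Ex\|X\|$.

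A cleaner route, which I would actually write up, avoids isotropy subtleties: work with the symmetric unconditional log-concave $X$ directly, set $b_i=\Ex|X_i|$, and note $\cale_i$ and $b_i^{-1}X_i$ both have mean absolute value $1$ and log-concave tails, so by the Bobkov--Nazarov comparison \eqref{Bob_Naz} (and its reverse for individual coordinates of unconditional log-concave vectors) one gets $\Ex\|\sum_i a_i\cale_i e_i\|\leq C\,\Ex\|\sum_i a_i b_i^{-1}X_i e_i\| = C\,\Ex\|X'\|$ where $X'$ has coordinates $a_ib_i^{-1}X_i$; taking $a_i=b_i$ gives $\Ex\|\sum_i b_i\cale_i e_i\|\leq C\Ex\|X\|$, and then $\Ex\|\cale\|\leq \Ex\|\sum_i b_i\cale_i e_i\|/\min_i b_i$ is useless unless the $b_i$ are bounded below — which is exactly where I need the cotype hypothesis together with isotropy to force $b_i\geq c$. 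So in fact isotropy (after the diagonal reduction) is essential, and the argument is: reduce to symmetric isotropic $X$, observe $b_i=\Ex|X_i|\geq c$ (one-dimensional log-concave, variance one), hence $\Ex\|\cale\|\leq c^{-1}\Ex\|\sum_i b_i\cale_i e_i\|\leq C\,c^{-1}\Ex\|X\|$, and combine with Theorem~\ref{uncond}.

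The main obstacle is the comparison $\Ex\|\sum_i b_i\cale_i e_i\|\leq C\Ex\|X\|$ at the level of the full norm rather than single functionals: \eqref{Bob_Naz} is stated for one-dimensional marginals, so I must upgrade it to a statement about $\Ex\sup_{t\in T}$, i.e. about the whole norm. The right tool is again a chaining/majorizing-measure argument as in the proof of Theorem~\ref{uncond} — or, more simply, the observation that for an unconditional log-concave vector $X$ with $\Ex|X_i|=b_i$, the vector $(b_i\cale_i)$ is \emph{stochastically dominated in each direction} and one can invoke the known two-sided comparison between unconditional log-concave vectors and the associated exponential vector (this is essentially \cite{BN} in the other direction, valid because $X$ is unconditional and log-concave so its coordinates have exponential-type tails from below as well). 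I expect this upgrade from one-dimensional to norm-level comparison, handled uniformly in the (exponential-size) net $T_n$, to be the only real work; everything else is bookkeeping with absolute and $(q,\beta)$-dependent constants.
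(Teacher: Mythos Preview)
Your overall plan is right: reduce to the isotropic case, apply Theorem~\ref{uncond}, and then the only task is to prove $\Ex\|\cale\|\leq C(q,\beta)\,\Ex\|X\|$. You also correctly obtain, via Jensen and contraction with $\Ex|X_i|\geq c$ (isotropy of a one-dimensional log-concave variable), the bound $\Ex\big\|\sum_i\ve_i e_i\big\|\leq C\,\Ex\|X\|$. But the step you label ``another contraction step gives $\Ex\|\sum_i\ve_i\cale_i e_i\|\leq C\,\Ex\|\sum_i\ve_i e_i\|$'' is \emph{not} contraction: $|\cale_i|$ is unbounded, and the inequality is simply false without cotype (take the $\ell_\infty^n$ norm, where the left side is of order $\log n$ and the right side equals $1$). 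This is precisely where the cotype hypothesis must enter, and it does so via the Maurey--Pisier theorem \cite{MP}: in a space with $q$-cotype constant $\beta$ one has $\Ex\big\|\sum_i\cale_i e_i\big\|\leq C_1(q,\beta)\,\Ex\big\|\sum_i\ve_i e_i\big\|$. Combining Maurey--Pisier with the Jensen/contraction bound above finishes the proof in two lines; this is exactly the paper's argument.

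By contrast, you locate cotype in the wrong places. The bound $b_i=\Ex|X_i|\geq c$ needs only isotropy and one-dimensional log-concavity, not cotype. Your ``cleaner route'' requires $\Ex\|\sum_i b_i\cale_i e_i\|\leq C\,\Ex\|X\|$ at the norm level; with $X$ standard Gaussian (isotropic, unconditional, log-concave) and the $\ell_\infty^n$ norm this fails by the same $\log n$ versus $\sqrt{\log n}$ gap, so no reverse Bobkov--Nazarov or chaining upgrade can save it without invoking cotype somewhere. The ``main obstacle'' you identify---lifting \eqref{Bob_Naz} to a norm-level comparison in the reverse direction---is therefore a red herring: drop that paragraph, replace the mislabeled contraction step by a citation of Maurey--Pisier, and your proof becomes correct and essentially identical to the paper's.
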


\begin{proof}
Applying diagonal transformation (and appropriately changing the norm) we may assume that $X$ is also
isotropic.

By the result of Maurey and Pisier \cite{MP} (see also Appendix II in \cite{MS}) one has
\[
\Ex\|\cale\|=\Ex\bigg\|\sum_{i=1}^n e_i\cale_i\bigg\|\leq C_1(q,\beta)\Ex\bigg\|\sum_{i=1}^n e_i\ve_i\bigg\|.
\]
By the unconditionality of $X$ and Jensen's inequality we get
\[
\Ex\|X\|=\Ex\bigg\|\sum_{i=1}^n e_i\ve_i|X_i|\bigg\|\geq \Ex\bigg\|\sum_{i=1}^n e_i\ve_i\Ex|X_i|\bigg\|.
\]
We have $\Ex|X_i|\geq \frac{1}{C}(\Ex|X_i|^2)^{1/2}=\frac{1}{C}$, therefore
\[
\Ex\|\cale\|\leq CC_1(q,\beta)\Ex\|X\|
\]
and the statement follows by Theorem \ref{uncond}.
\end{proof}

For general norm on $\er^n$ one has 
\[
\Ex\|\cale\|=\Ex\bigg\|\sum_{i=1}^n e_i\ve_i|\cale_i|\bigg\|\leq 
\Ex\sup_{i}|\cale_i|\Ex\bigg\|\sum_{i=1}^n e_i\ve_i\bigg\|
\leq C\log n\ \Ex\bigg\|\sum_{i=1}^n e_i\ve_i\bigg\|.
\]
This together with the similar argument as in the proof of Corollary \ref{cotype} gives the following.

\begin{cor}
For any $n$-dimensional unconditional, log-concave vector $X$, any norm $\|\ \|$ on $\er^n$ and
$p\geq 1$ one has
\[
(\Ex\|X\|^p)^{1/p}\leq C\Big(\log n\ \Ex\|X\|+\sup_{\|\varphi\|_{*}\leq 1}(\Ex|\varphi(X)|^p)^{1/p}\Big).
\]
\end{cor}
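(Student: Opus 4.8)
The plan is to repeat the proof of Corollary~\ref{cotype} almost verbatim, using the crude bound displayed just above in place of the Maurey--Pisier comparison; this substitution costs only a factor $\log n$. First I would reduce to the isotropic case: applying a diagonal linear map and replacing $\|\ \|$ by the correspondingly transformed norm changes neither side of the claimed inequality, preserves unconditionality and log-concavity, and lets us assume $\Cov(X)=I$, in particular $\Ex X_i^2=1$ for every $i\leq n$. With $X$ isotropic, Theorem~\ref{uncond} gives
\[
(\Ex\|X\|^p)^{1/p}\leq C\Big(\Ex\|\cale\|+\sup_{\|\varphi\|_{*}\leq 1}(\Ex|\varphi(X)|^p)^{1/p}\Big),
\]
so it suffices to show $\Ex\|\cale\|\leq C\log n\,\Ex\|X\|$.

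For the upper bound on $\Ex\|\cale\|$ I would use the chain of inequalities displayed before the statement: since $\cale$ has the same distribution as $(\ve_i|\cale_i|)_{i\leq n}$ with $(\ve_i)$ independent of $(\cale_i)$, conditioning on $(\cale_i)$ and applying the contraction principle for Rademacher averages pulls out the factor $\sup_{i\leq n}|\cale_i|$; a standard union bound using the exponential tails of the $\cale_i$ then gives $\Ex\sup_{i\leq n}|\cale_i|\leq C\log n$, whence $\Ex\|\cale\|\leq C\log n\,\Ex\|\sum_{i=1}^n e_i\ve_i\|$. For the lower bound on $\Ex\|X\|$ I would argue as in Corollary~\ref{cotype}: by unconditionality $X$ has the same distribution as $(\ve_i|X_i|)_{i\leq n}$, so Jensen's inequality applied to the convex function $a\mapsto\Ex_{\ve}\|\sum_{i=1}^n e_i\ve_i a_i\|$ yields $\Ex\|X\|\geq \Ex\|\sum_{i=1}^n e_i\ve_i\Ex|X_i|\|$; since log-concavity gives $\Ex|X_i|\geq \frac1C(\Ex X_i^2)^{1/2}=\frac1C$, a further comparison of Rademacher averages gives $\Ex\|\sum_{i=1}^n e_i\ve_i\Ex|X_i|\|\geq \frac1C\Ex\|\sum_{i=1}^n e_i\ve_i\|$. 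Combining the two estimates yields $\Ex\|\cale\|\leq C\log n\,\Ex\|X\|$, and plugging this into the bound from Theorem~\ref{uncond} completes the proof.

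There is no essential difficulty here beyond what already appears in Corollary~\ref{cotype}; the points that deserve a line of checking are the reduction to isotropic position (that both sides are genuinely invariant under the diagonal map, up to the change of norm) and the two applications of the contraction principle together with the convexity used in Jensen's inequality, all of which are routine. The only genuinely new ingredient relative to Corollary~\ref{cotype} is the estimate $\Ex\sup_{i\leq n}|\cale_i|\leq C\log n$, which is where the $\log n$ factor enters and which in general cannot be removed.
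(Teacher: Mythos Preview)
Your proposal is correct and follows exactly the approach indicated in the paper: reduce to the isotropic case, apply Theorem~\ref{uncond}, and then bound $\Ex\|\cale\|\leq C\log n\,\Ex\|X\|$ by combining the displayed contraction estimate with the Jensen/log-concavity lower bound from the proof of Corollary~\ref{cotype}. You have in fact supplied more detail than the paper (which merely points to the displayed inequality and says ``similar argument''), and all of those details are sound.
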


\subsection*{Acknowledgments}

Research of R. Lata{\l}a  was partially supported by the Foundation for Polish Science and
MNiSW grant N N201 397437.

\noindent
Institute of Mathematics, University of Warsaw\\
Banacha 2, 02-097 Warszawa, Poland\\
Institute of Mathematics, Polish Academy of Sciences\\
\'Sniadeckich 8, 00-956 Warszawa, Poland\\
E-mail: {\tt rlatala@mimuw.edu.pl}

\end{document}